\theoremstyle{definition}
\newcommand{\abs}[1]{\left\vert#1\right\vert}
\newcommand{\Bin}{\textrm{Bin}}
\newcommand{\adj}{\mathrm{adj}}
\newcommand{\Pbb}{\mathbb{P}}
\newcommand{\Fcal}{\mathcal{F}}
\newcommand{\Rcal}{\mathcal{R}}
\newcommand{\Wcal}{\mathcal{W}}
\newcommand{\Erdos}{Erd\H{o}s}
\newcommand{\Renyi}{R\'{e}nyi~}
\numberwithin{equation}{section}
\theoremstyle{plain}
\newtheorem{thm}{Theorem}[section]
\newtheorem{cor}[thm]{Corollary}
\begin{document}

\begin{frontmatter}
\title{Percolation Threshold Results on \Erdos-\Renyi Graphs:
  an Empirical Process Approach}
\runtitle{Percolation Threshold Results on Erd\H{o}s-R\'{e}nyi Graphs}

\begin{aug}
\author{\fnms{Michael J.} \snm{Kane}\ead[label=e1]{michael.kane@yale.edu}}

\runauthor{M.~J.~Kane}

\affiliation{Yale University}

%
\end{aug}

\begin{abstract}
\end{abstract}

%
\begin{keyword}
\kwd{threshold}
\kwd{directed percolation}
\kwd{stochastic approximation}
\kwd{empirical processes}
\end{keyword}

\end{frontmatter}

\section{Introduction}

Random graphs and discrete random processes provide a general approach 
to discovering properties and characteristics of random graphs and 
randomized algorithms.  The approach generally works 
by defining an algorithm on a random graph or a randomized algorithm.
Then, expected changes for each step of the process are used to propose
a limiting differential equation and a large deviation theorem 
is used to show that the
process and the differential equation are close in some sense.  In this
way a connection is established between the resulting process's 
stochastic behavior and the dynamics of a deterministic, asymptotic
approximation using a differential equation.  This approach is 
generally referred to as {\em stochastic approximation} and 
provides a powerful tool for understanding the
asymptotic behavior of a large class of processes defined
on random graphs.  However, little work has been done 
in the area of random graph research to investigate the weak limit behavior
of these processes before the asymptotic behavior overwhelms the random
component of the process.  This context is particularly relevant 
to researchers studying news propagation in social 
networks, sensor networks, and epidemiological outbreaks.  In each of these 
applications,
investigators may deal graphs containing tens to hundreds of vertices and
be interested not only in expected behavior over time but also error estimates.


This paper investigates the connectivity of graphs, with emphasis
on \Erdos-\Renyi graphs, near the 
percolation threshold when the number of vertices is not asymptotically large.
More precisely, we define a simple algorithm for simulating directed 
percolations on a graph in Section \ref{sect:dp_algo}. 
Section \ref{sect:overview} provides an overview of the two fundamental 
techniques required for our investigation: stochastic approximation and the 
functional martingale central limit theorem.
In Section \ref{sect:sawlt}, these tools are applied to the directed 
percolation algorithm to show that the behavior of the process converges 
to an ordinary differential equation plus a stretched-out brownian motion.
This result allows us to re-examine many of the classical random graph 
results \cite{Erdos1960} involving the evolution of random graphs near the 
percolation threshold. Furthermore, because the process can be modeled
as a function of a stretched out brownian-motion we can draw on the
stochastic calculus literature to derive new results for random graphs.
For example, in Section \ref{sect:giantStoppingTime} this new 
representation is used to find the percolation threshold for the graph
by deriving the distribution of the stopping time for the algorithm
to percolate over the largest component. 



\section{The Directed Percolation Algorithm} \label{sect:dp_algo}

The percolation algorithm investigated in this paper is defined in
Algorithm \ref{algo:percolation}. The algorithm works on 
a graph with all vertices labelled ``not visited''.  
At time zero one vertex is one labelled ``visited not
transmitted''. The algorithm proceeds by selecting one vertex labelled
``visited not transmitted''. This vertex is labeled ``visited''. The
vertexes neighbors that are labelled ``not visited'' are relabelled
``visited no transmitted''.
If the algorithm progresses to a point where all vertices
are marked either``not visited'' or ``visited transmitted'', then
the algorithm is reseeded by selecting a binomial number of vertices
labelled ``not visited'' and ``visited not transmitted''. The algorithm
continues until all vertices are marked ``visited transmitted''.

\begin{algorithm}[H] 
let $g$ be graph with $n+1$ vertices\\
label all vertices in $g$ ``not visited'' \\
pick one vertex uniformly at random and label it ``visited not transmitted''\\
\While{{\em not all vertices are labelled} ``visited transmitted''}
{
  let $V_n$ be the set of vertices labelled ``not visited'' \\
  let $V_v$ be the set of vertices labelled ``visited not transmitted'' \\
  let $V_t$ be the set of vertices labelled ``visited transmitted'' \\
  \If{$\abs{V_v} > 0$}
  {
    pick a vertex $v$ uniformly at random from $V_v$\\
    label $v$ ``visited transmitted'' \\
    label $\adj(v) \cap V_n$ ``visited not transmitted''
  }
  \Else
  {
    draw $B_i \sim \Bin(p, \abs{V_n})$\\
    pick $B_i$ vertices uniformly at random and label them 
      ``visited not transmitted''
  }
}
\caption{The directed percolation algorithm}
\label{algo:percolation}
\end{algorithm}

One important characteristic of the algorithm is that edges do not need
to be revealed until the algorithm needs to relabel the ``not visited''
vertices adjacent to the selected ``visited not transmitted'' vertex.
This scenario is referred to as the 
{\em method of deferred decision}\cite{Knuth1990} 
and for \Erdos-\Renyi graphs, it induces a binomial 
conditional distribution on the number of vertices whose label 
changes from ``not visited'' to ``visited not transmitted'' at each
step of the algorithm.

\begin{thm}
When the percolation algorithm described in Algorithm \ref{algo:percolation}
is run on an \Erdos-\Renyi graph with $n+1$ vertices
then at iteration $k$ with $0\leq k$ the number of vertices going from
the ``not visited'' to the ''visited not transmitted'' labelling in step 
$k+1$ is distributed as $\Bin(\abs{V_n}, p)$ where $p$ is the 
probability any two vertices are connected.
\end{thm}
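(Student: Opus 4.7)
The plan is to prove this as a direct consequence of the principle of deferred decisions. I would represent the \Erdos-\Renyi graph $G(n+1,p)$ by the family of independent $\Bern(p)$ edge indicators $\{X_{uw}\}_{u<w}$, and reinterpret Algorithm~\ref{algo:percolation} as an adaptive procedure that examines these indicators only when needed rather than all at once. The conclusion will follow by showing that at step $k+1$ the algorithm queries a fresh block of these $X_{uw}$'s that is independent of everything revealed through step $k$.

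First, I would set up an induction on $k$ describing the set $R_k$ of pairs whose indicator $X_{uw}$ has been examined by the end of step $k$. My inductive claim is that $R_k$ consists exactly of those pairs $(u,w)$ for which $u$ has been transitioned from $V_v$ to $V_t$ at some earlier step $j \leq k$ and $w$ belonged to the $V_n$ that was current at step $j$. The base case $k=0$ is trivial, and the inductive step follows directly from the algorithm's description: in each If branch, only edges between the newly selected transmitter and the vertices in the current $V_n$ are examined, while the Else branch examines no edge indicators at all.

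Second, in the If branch of step $k+1$, suppose the algorithm selects vertex $v \in V_v$. For any $w$ in the current $V_n$, I would argue that $(v,w) \notin R_k$. Indeed, by the inductive description, every pair in $R_k$ has its first coordinate in the current $V_t$, and $v \in V_v$ has not yet been transmitted; symmetrically, $w$ has never been in $V_v \cup V_t$, so at every previous transmission step $j$ the vertex $w$ was in the $V_n$ of the time and the queried edge had the then-transmitter (neither $v$ nor $w$) as its active endpoint. Consequently the variables $\{X_{vw} : w \in V_n\}$ are independent of the $\sigma$-algebra generated by the algorithm's history through step $k$, and the number of vertices moving from $V_n$ to $V_v$ in step $k+1$ equals $\sum_{w \in V_n} X_{vw}$, which is distributed as $\Bin(\abs{V_n}, p)$. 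For the Else branch, the statement is immediate from the explicit construction of $B_i$ in the algorithm.

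The main obstacle will be the careful bookkeeping inside the inductive step: I need to verify that at the earlier moment when $v$ itself was transferred from $V_n$ into $V_v$, the only indicator involving $v$ that was examined was the single one from the then-current transmitter to $v$, so that no $X_{vw}$ for $w$ in the present $V_n$ was revealed as a byproduct, and similarly for each such $w$. Once this accounting is nailed down, the distributional claim reduces to a sum of independent $\Bern(p)$ indicators.
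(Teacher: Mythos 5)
Your proposal is correct and follows essentially the same route as the paper: the paper's proof also invokes the method of deferred decisions, asserting that the edges between the selected transmitter and the current ``not visited'' vertices are unrevealed and hence independent $\Bern(p)$, with the Else branch immediate by construction. Your inductive bookkeeping of the revealed-pair set $R_k$ simply makes rigorous the paper's one-line claim that those edges are ``unknown.''
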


\begin{proof}
Let $V_{n,k}$ be the set of vertices labelled ``not visited'' at 
iteration $k$.  If 
there is at least one vertex marked ``visited not transmitted'' then one
of those vertices will be selected for transmission. The edges between
that vertex and its adjacent ``not visited'' vertices are unknown. However,
the probability that it is connected to any one of the ``not visited''
vertices is $p$ and therefore the number of ``not visited'' vertices it 
is connected to, which is the same as the number of new vertices that will
be labelled ``visited not transmitted'' in the next step of the algorithm,
is distributed $\Bin(\abs{V_{n,k}}, p)$. If, on the other hand, there are no
vertices marked ``visited not transmitted'' then, by definition of the 
algorithm, a $\Bin(\abs{V_{n,k}}, p)$ number of vertices labelled 
``not visited'' will be labelled ``visited not transmitted'' in the next step.
\end{proof}

The proof shows that at any step $k$ the number of new vertices that will
be labelled ``visited not transmitted'' at $k+1$ is a binomial 
number depending only on the current number of ``not visited'' vertices and
the connection probability. The aggregate number of vertices labelled 
``visited not transmitted'' and ``visited transmitted'' is strictly 
increasing based on this distribution.

The percolation algorithm on an \Erdos-\Renyi graph can be recast as 
an urn process with one urn holding balls corresponding to vertices labelled
``not visited'' and another holding balls corresponding to vertices labelled
either ``visited not transmitted'' or ``visited transmitted''. Initially,
all $n$ balls are contained in the ``not visited'' urn. Let
$\abs{V_{n,k}}$ be the number of balls in the ``not visited'' urn at time
$k$ with $\abs{V_{n,0}} = 0$ then at each step $\Bin(\abs{V_{n,k}}, p)$
balls are drawn from the ``not visited'' urn and placed in the ``visited''
urn. This urn process is stochastically equivalent to the percolation process.
A formal definition for the urn algorithm is given in Algorithm 
\ref{algo:urn_percolation}.

\begin{algorithm}[H] \label{algo:urn_percolation}
consider two urns labelled ``not visited'' and ``visited'' \\
place $n$ ball into the ``not visited'' urn \\
\While{there are balls in the ``not visited'' urn} 
{
  let $U_n$ be the number of balls in the ``not visited'' urn \\
  draw $b \sim \Bin(U_n, p)$ \\
  move $b$ balls from the ''not visited'' urn to the ``visited'' urn \\
}
\caption{The urn model equivalent of the directed percolation algorithm}
\end{algorithm}

The urn model process provides
a conceptually simpler tool for investigating the behavior of the 
directed percolation process.
It also provides a means for investigating the behavior of the algorithm
near the percolation threshold through the following theorem.

\begin{thm} \label{thm:subcritical}
Consider the urn model process. The event were, at time $k$, the number of 
balls in the ``visited'' urn is less than than $k$, is equivalent to exhausting
the component where the algorithm started. That is, all vertices in the
component are labelled ``visited.''
\end{thm}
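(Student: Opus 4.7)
The plan is to translate the urn inequality $V_k < k$ into a statement about the remaining ``visited not transmitted'' pile and then exploit a standard left-continuous walk argument. Let $B_i$ denote the binomial number drawn at step $i$ (equivalently, the number of new ``visited not transmitted'' vertices produced at step $i$ of the percolation algorithm). Write $V_k = \sum_{i=1}^{k} B_i$ for the visited urn count and let $W_k$ be the number of ``visited not transmitted'' vertices at step $k$ of the percolation algorithm, so $W_0 = 1$ accounts for the initial seed.

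First I would argue that, as long as the starting component has not yet been exhausted, every iteration of the algorithm consumes exactly one $W$-vertex (the one being transmitted) and produces $B_k$ new ones, giving $W_k = W_{k-1} - 1 + B_k$ and, by telescoping,
\begin{equation*}
W_k \;=\; 1 + V_k - k.
\end{equation*}
In particular, during this regime, the event $\{V_k < k\}$ is identical to $\{W_k \leq 0\}$.

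Next I would use the key structural observation that $W$ is a left-continuous walk on $\Zbb$: because $B_k \geq 0$, the increments of $W$ lie in $\{-1, 0, 1, 2, \dots\}$, so $W$ can step down only by exactly one at a time. Starting from $W_0 = 1 > 0$, a discrete intermediate-value argument therefore gives $W_k \leq 0$ if and only if $W_j = 0$ for some $j \leq k$. By definition of the percolation algorithm, the event $\{W_j = 0\}$ is precisely the event that no ``visited not transmitted'' vertices remain, i.e.\ that every vertex of the starting component is labelled ``visited'' in one of the two senses. Combining these, the first $k$ at which $V_k < k$ is exactly the step at which the starting component is exhausted, which is the equivalence the theorem asserts.

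The main subtlety---rather than a real obstacle---is that the identity $W_k = 1 + V_k - k$ depends on no reseeding having occurred before time $k$. This is exactly what the theorem requires: the statement concerns the first excursion of the algorithm, and reseeding is triggered precisely at exhaustion. Up to and including the exhaustion time, the urn-model bookkeeping and the percolation algorithm's $W$-pile match perfectly, so the left-continuous walk argument applies verbatim and yields the claimed equivalence.
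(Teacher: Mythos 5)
Your proof is correct and rests on the same correspondence the paper uses: each iteration consumes one ``visited not transmitted'' vertex and produces a binomial number of new ones, so the visited-urn count tracks the step counter until the component runs dry. Where you differ is in execution rather than in strategy. The paper's proof is an informal narrative that traces the first two or three steps of a small example (``without loss of generality, assume that there is one adjacent vertex'') and then asserts the general pattern; you instead make the bookkeeping explicit with the identity $W_k = 1 + V_k - k$ and then invoke the skip-free-downward property of the walk $W$ to conclude that the first time $V_k < k$ occurs is exactly the first time $W$ hits zero, i.e.\ exhaustion. That last observation is what actually closes the equivalence in both directions --- it rules out $V_k$ jumping below $k$ without $W$ passing through zero, a point the paper never addresses --- and your closing remark that the identity is only valid before any reseeding correctly delimits the regime in which the urn model and the percolation algorithm agree. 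In short: same underlying idea, but your version supplies the two small lemmas (the telescoped identity and the unit-downstep argument) that the paper's proof leaves implicit.
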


\begin{proof}

Consider the directed percolation process on a graph with size greater at least
two. At step zero one a ``seed'' vertex is selected.
At the beginning of step one the seed vertex is chosen for transmission.
If it has no neighbors, then the first component, which consisted of the
seed vertex only is exhausted. Otherwise, without loss of generality,
assume that there is one adjacent vertex, labelled $v_1$. The seed vertex
is no longer considered and $v_1$ is labelled ``visited not transmitted,''
which is equivalent to moving one ball into the ``visited'' urn. Once
again if $v_1$ has no neighbors then, at time step two, the number of 
transmitted vertices is 1 since the seed vertex is not included and no
new vertices are visited. In this case, $k=1$ at time step two corresponding
to the component consisting of the seed vertex and $v_1$ being visited.
This process continues with newly visited vertices corresponding to
moving balls to the ``visited'' urn. The process stops when the graph component
is exhausted, which occurs when the total number of visited vertices is 
less than the time step.
\end{proof}

\begin{figure}
\centerline{
\includegraphics[width=4.5in]{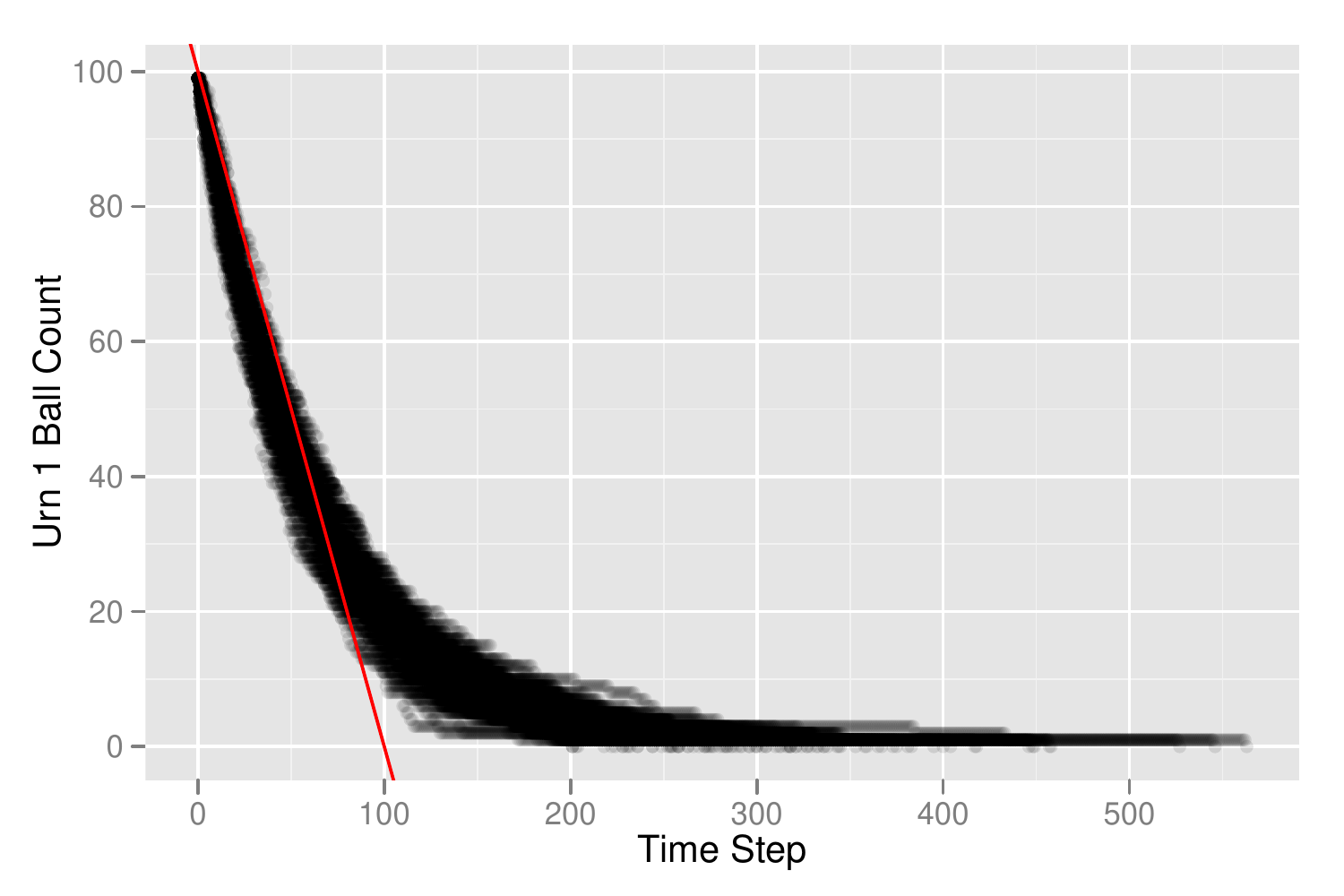}
}
\caption{Visualizing the empirical distribution of the number of balls in
urn 1 over 100 runs near the percolation threshold}
\label{fig:urn1Process}
\end{figure}


Figure \ref{fig:urn1Process} 
shows the number of balls in urn 1 when 
the urn process described in Algorithm \ref{algo:urn_percolation} was simulated
for 100 runs with $p=1.6$ and $n=100$.  A diagonal line is also shown and any
points to the right of the diagonal line correspond to simulated process
whose corresponding percolation process failed to spread to all vertices
in the graph. For this simulation seven of the 100 runs failed.

The figure provides two important insights in understanding the behavior
of the process. First, the process slope is steep compared to the diagonal
at the beginning of the process.  For the urn process, the number of balls 
in urn 1 is large resulting in a large number of balls moving to urn 2
at each time step.  As the process continues there are fewer
balls in urn 1 and as a result fewer balls are moved to urn 2 at each time
step and the slope decreases.
For the graph process, this corresponds to there being
a large number of neighbors for each of the ``visited not transmitted''
vertices. Second, the process variance increases while the slope is steep
compared to the diagonal line and concentrates as the process levels off.
By definition, the number of balls in urn 2 cannot be bigger than $n$.
Each of the processes approach $n$ quickly early in the process and then
more slowly as the process nears the stopped state.

Further simulation results show that as $n$ increases the relative 
variation in the process decreases. That is, the process concentrates
on its expected value at each time step. These expected values over time
can be approximated using a differential equation. The next section provides
the techniques for understanding this concentration phenomena as well as
for finding the corresponding differential equation.

\section{Overview of the Method} \label{sect:overview}

\subsection{Stochastic Approximation}

\subsubsection{Background}

As Wormald \cite{Wormald1999} points out, ``This idea of approximation
has existed in connection with continuous processes... essentially since
the invention of differential equations by Newton for approximation of the
motion of bodies in mechanics.'' However, the term stochastic approximation
was originally coined by \cite{RobinsMonro}. Their paper presented a method
for finding the root of a monotone function under noisy conditions.
Kurtz \cite{Kurtz1970} developed this idea further. 
By imposing appropriate bounds on the difference between steps of the
process he showed that a reparameterized version of the process converges in
distribution to a differential equation. This area of research has remained
active with Darling and Norris \cite{DarlingNorris2008} publishing 
a recent paper providing new conditions for the convergence of stochastic
processes to an ODE based on Gr\"{o}nwall's inequality~\cite{Gronwall1919}
along examples of random processes that lend themselves to thes results.

Stochastic approximation techniques have also been applied to random graph.
Wormald \cite{Wormald1995} uses techniques similar to those provided by
Kurtz to show that the degree of the vertices in a random graph, where
the maximum degree is bounded, converge as the total number of 
vertices gets large. Since then Wormald provided new results 
\cite{Wormald1999} handling the case where processes are not strictly
Markovian. The paper also provided a survey of some graph processes
that are amenable to the stochastic approximation approach including the 
random greedy matching algorithm presented in \cite{KarpSipser1981} and
the degree bounded graph process which Rucinski and Wormald 
\cite{RucinskiWormald1992} used to answer a question originally posed
by \Erdos \  concerning the asymptotic behavior of a degree bounded graph.
Readers interested in an further applications of 
stochastic approximation to examine processes on random graphs
are encouraged to consult Wormald or the recent survey by 
Pemantle \cite{Pemantle2007}.

%
\subsubsection{Overview}

Consider a process $\mathbf{X} = \{X_k : k \in K\}$ where $K$ is the index
set $\{0, 1, ..., n\}$.  Assume that the behavior of $X_k$ is determined by
\begin{equation} \label{eqn:saform}
X_{k+1} = X_k + \varrho_{k+1} 
\end{equation}
where $\varrho_{k+1}$ is a random variable adapted to the natural filtration
of $\mathbf{X}$ up to time $k+1$, which will be denoted $\Fcal_{k+1}$.

The urn process takes values in the integers from zero to $n$
and is defined over all non-negative integers. To derive asymptotic results
it the process is reparameterized, normalizing by $n$
over both the domain and the range. Furthermore, this reparameterized 
process is defined to be cadlag so that its domain and range take values
in the reals. Let $\alpha = k/n$, then the new process $\{Y\}$ can be defined
by
\begin{equation*}
Y_0 = \frac{1}{n} X_0 
\end{equation*}
\begin{equation*}
Y_{\lfloor \alpha n + 1/n \rfloor} = \frac{1}{n} X_k.
\end{equation*}
The reparameterized version process defined in Equation \ref{eqn:saform}
can then be written as
\begin{equation*}
Y_{\lfloor \alpha n + 1 \rfloor} = Y_{\lfloor \alpha n\rfloor} + \frac{1}{n}
  \varrho_{\lfloor \alpha n + 1 \rfloor}
\end{equation*}
or, for notational simplicity
\begin{equation*}
Y_n (\alpha + 1/n) = Y_n(\alpha) + \lambda_n( \alpha + 1/n ).
\end{equation*}
If $\Pbb_{\alpha} \lambda_n( \alpha + 1/n ) = f(Y_n)$ then the reparameterized
process can be further re-written as
\begin{align*}
Y_n (\alpha + 1/n) = Y_n(\alpha) + f(Y_n(\alpha)) - \xi_n(\alpha + 1/n)
\end{align*}
where $\xi_n(\alpha + 1/n)$ is a centered, martingale increment. Now, let
$\{y\}$ be a deterministic analog of the $\{Y\}$ process with $y_0 = Y_0$
and
\begin{equation} \label{eqn:deterministic_analogue}
y_n(\alpha + 1/n) = y_n (\alpha) + f(y_n(\alpha)).
\end{equation}
The difference between $\{Y\}$ and $\{y\}$ at any value of $\alpha$ over 
the domain can be written as
\begin{align*}
\Delta_n(\alpha + 1/n) &= Y_n(\alpha + 1/n) - y_n(\alpha + 1/n) \\
  &= \Delta_n(\alpha) + f(Y_n(\alpha)) - f(y_n(\alpha)) + \xi_n(\alpha + 1).
\end{align*}
When the difference $f(Y_n(\alpha)) - f(y_n(\alpha))$ is small, the 
difference between the process and the deterministic analogue is the
sum of the martingale increments.
\begin{align}
\Delta_n(\alpha + 1/n) &\simeq \Delta_n(\alpha) + \xi_n(\alpha + 1/n) \notag \\ 
  &= \sum_{k=1}^{\alpha n} \xi_n(k/n). \label{eqn:mg_increments}
\end{align}
If the sum of the martingale increments converges to zero and
$f(y_n(\alpha))$
can be approximated arbitrarily well by a differential equation, then
the reparameterized process converges to the differential equation
asymptotically.

\subsection{Functional Martingal Central Limit Theorem}

\subsubsection{Background}

\subsubsection{Overview} \label{sect:fcclt}

In the classic stochastic approximation literature it is assumed that
the term in Equation \ref{eqn:mg_increments} is asymptotically zero. However,
\cite{Hall1980} show that certain martingale increment processes, such
as this one, which are defined over cadlag sample paths,
converges to a stretched-out brownian motion. That is, a brownian motion
$B(t)$ with strictly increasing transformation $H(\cdot)$ to the time scale:
$B(H(t))$. Sufficient conditions for convergence to a stretched-out 
Brownian motion from \cite{Pollard1984} are given here for reference.

\begin{thm} \label{thm:PollardSBM}
Let $\{Z_n(t) : 0 \leq t < \infty\}$ 
be a sequence of martingales adapted to its natural filtration, 
and $\Pbb Z_n(t)^2 < \infty$. Let $\{Z_n\}$ have conditional variance 
process $\{V_n\}$. Let $H$ be a continuous, increasing function on $[0, \infty)$
with $H(0) = 0$. Let $J_T(x)$ be the maximum jump in a sample path
\begin{equation}
J_T(x) = max\{ \abs{ x(s) - x(s-) } : 0 \leq s \leq T\}.
\end{equation}
Sufficient conditions for convergence in distribution of $\{X_n\}$ to a
stretched-out brownian motion $B(H(t))$ are:
\begin{enumerate}
\item $Z_n(0) \rightarrow 0$ in probability
\item $V_n(t) \rightarrow H(t)$ in probability for each fixed $t$
\item $\Pbb (J_k (Z_n) )^2 \rightarrow 0$ for each fixed $k$ as 
  $n \rightarrow \infty$
\end{enumerate}
\end{thm}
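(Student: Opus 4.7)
The plan is to establish weak convergence of $\{Z_n\}$ in the Skorohod space $D[0,\infty)$ by the standard two-step decomposition: prove convergence of finite-dimensional distributions, then prove tightness of the laws. Since a stretched-out Brownian motion $B(H(\cdot))$ is characterized by its finite-dimensional laws and has continuous sample paths, combining these two pieces and identifying the limit will complete the proof via Prohorov's theorem.

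For finite-dimensional convergence, I would fix $0 = t_0 < t_1 < \cdots < t_m$ and apply a discrete-time martingale central limit theorem to the increment vector $(Z_n(t_i) - Z_n(t_{i-1}))_{i=1}^{m}$ (approximated by sums along a refining partition so the standard CLT hypotheses apply). Hypothesis (1) handles the starting point, hypothesis (2) provides the limiting conditional variance $H(t_i) - H(t_{i-1})$ of the $i$-th increment, and hypothesis (3) plays the role of a Lindeberg-type negligibility condition, since for any $\eps > 0$ one has $\Pbb\bigl[(Z_n(s) - Z_n(s-))^2 \mathbf{1}\{\abs{Z_n(s) - Z_n(s-)} > \eps\}\bigr] \leq \Pbb (J_T(Z_n))^2 \to 0$. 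These inputs yield a centered Gaussian limit with covariance matching $B(H(\cdot))$, and a Cram\'er--Wold device upgrades marginal convergence to joint convergence.

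For tightness I would verify Aldous's criterion: for any sequence of stopping times $\sigma_n \leq \tau_n \leq \sigma_n + \delta_n$ with $\tau_n \leq T$ and $\delta_n \downarrow 0$, show that $Z_n(\tau_n) - Z_n(\sigma_n) \to 0$ in probability. Applying optional sampling to the martingale $Z_n^2 - V_n$, the conditional second moment of the increment equals $\Pbb\bigl[V_n(\tau_n) - V_n(\sigma_n) \mid \Fcal_{\sigma_n}\bigr]$, which is controlled via hypothesis (2) together with the continuity of $H$ and Markov's inequality. Hypothesis (3) then prevents any jumps from surviving in the limit, so every limit point concentrates on continuous paths, consistent with Brownian motion.

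The main obstacle is that hypothesis (2) is only pointwise convergence of the variance process, whereas the Aldous estimate needs uniform smallness of $V_n(\tau_n) - V_n(\sigma_n)$. Upgrading pointwise to uniform convergence on compacts uses the monotonicity of $V_n$ against the continuity of $H$ (a Dini-type argument combined with a Helly selection on a countable dense subset). A secondary subtlety is converting the $L^2$ jump bound in (3) into the almost-sure path-continuity of the limit; this is handled by passing jumps through the Skorohod $J_1$ topology, noting that $J_T$ is continuous at paths with no large jumps, so any weak limit must have $J_T = 0$ almost surely.
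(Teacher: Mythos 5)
The paper never proves this statement: it is quoted from Pollard (1984) ``for reference,'' so there is no in-paper argument to compare against, and your sketch has to be judged against the standard proof of the martingale functional CLT. Your architecture --- finite-dimensional convergence via a discrete martingale CLT plus Cram\'er--Wold, tightness via Aldous's criterion, and continuity of the limit from the jump condition --- is exactly the canonical route, and the Dini/P\'olya upgrade of pointwise convergence of the increasing processes $V_n$ to uniform convergence on compacts (using continuity of $H$) is the right tool for the modulus estimate. So the plan is sound in outline.

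Two steps, however, would fail as written. First, the displayed Lindeberg bound is not the Lindeberg condition: $\Pbb\bigl[(Z_n(s)-Z_n(s-))^2\mathbf{1}\{\abs{Z_n(s)-Z_n(s-)}>\eps\}\bigr]$ controls a \emph{single} increment, whereas the CLT needs the \emph{sum} over all increments of the partition, $\sum_i \Pbb\bigl[\xi_{ni}^2\mathbf{1}\{\abs{\xi_{ni}}>\eps\}\bigr]$, and $(J_T(Z_n))^2$ dominates only one term of that sum. The correct estimate is $\sum_i \xi_{ni}^2\mathbf{1}\{\abs{\xi_{ni}}>\eps\} \leq \bigl(\sum_i \xi_{ni}^2\bigr)\mathbf{1}\{J_T(Z_n)>\eps\}$, and to make its expectation vanish you need uniform integrability of the sum of squared increments --- which hypotheses (1)--(3) do not provide by themselves. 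Second, and for the same reason, the Aldous step requires $\Pbb\bigl[V_n(\tau_n)-V_n(\sigma_n)\bigr]\to 0$, but hypothesis (2) gives only convergence \emph{in probability} of $V_n(t)$, so you have no control over these expectations. Both defects are repaired by the localization that the standard proof builds in and your sketch omits: stop each $Z_n$ at $\rho_n=\inf\{t: V_n(t)\geq H(T)+1\}$, note that the stopped martingale has conditional variance bounded by $H(T)+1$ plus one overshoot controlled by (3), that conditions (1)--(3) pass to the stopped process, and that $\Pbb\{\rho_n\leq T\}\to 0$ by (2), so proving the theorem for the stopped processes suffices. Without this truncation the $L^1$ and uniform-integrability claims your two main estimates rest on are unsupported.
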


\section{Applying Stochastic Approximation and the Functional Central
Limit Theorem to the Urn Process} \label{sect:sawlt}

Returning to the urn process, let $n$ be the total number of balls in urn 1
and urn 2 at any time,
let $p$ the probability any single ball in urn 1 is moved to urn 2, and let
let $U_n(k)$ be the number of balls in urn
1 at time $k$. Then 
\begin{equation} \label{eqn:urn1_process}
U_n(k+1) = U_n(k) - b(k+1)
\end{equation}
where $b(k+1) \sim \Bin (U_n(k), p)$.
Let $S_n(\alpha)$ be the reparameterized process with $S_n(0) = 0$ and
\begin{equation} \label{eqn:urn1_reparam}
S_n(\alpha + 1/n) = S_n(\alpha) - \zeta_n(\alpha + 1/n)
\end{equation}
where $\zeta_n(\alpha + 1/n)$ is an $\Fcal_\alpha$-measurable random variable
with distribution $\Bin(U_n(\alpha n), p) / n - pU_n(\alpha)$. 
Equation \ref{eqn:urn1_reparam} can then be written as
\begin{align}
S_n(\alpha + 1/n) &= S_n(\alpha) - \zeta_n(\alpha + 1/n) \notag \\
  &= S_n(\alpha) - \zeta_n(\alpha + 1/n) - 
    \Pbb_\alpha \zeta_n(\alpha + 1/n) + \Pbb_\alpha \zeta_n(\alpha + 1/n) 
    \notag\\
  &= S_n(\alpha) - p U_n(\alpha n)/n  - \left(
    \zeta_n(\alpha + 1/n) - p U_n(\alpha n)/n \right) \notag \\
  &= q S_n(\alpha) - \delta_n(\alpha + 1/n) \label{align:urn1_recursion}
\end{align}
where $q=1-p$ and $\delta_n(\alpha + 1/n)$ is a martingale increment.

\subsection{Approximating the Process with a Differential Equation} 
\label{sect:sa}

\begin{thm} \label{thm:stoch_approx}
If the sum of the martingale increments up to time $\alpha$ can be bound by 
$o_p(n^{-1/2})$ or
less then the process in Equation \ref{align:urn1_recursion} can be written
as
\begin{equation*}
S_n(\alpha) = e^{-c \alpha} + O_p(n^{-1})
\end{equation*}
\end{thm}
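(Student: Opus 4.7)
The plan is to iterate the one-step recursion in Equation~\ref{align:urn1_recursion} into a closed-form solution for $S_n(\alpha)$, and then control the deterministic and stochastic pieces of that closed form separately.

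First I would unfold the recursion. Applying $S_n(\alpha + 1/n) = q S_n(\alpha) - \delta_n(\alpha + 1/n)$ repeatedly starting from the normalized initial condition $S_n(0) = 1$ (the ``not visited'' urn is initially full and the process is normalized by $n$) gives
\begin{equation*}
S_n(\alpha) = q^{\lfloor \alpha n \rfloor} S_n(0) - \sum_{k=1}^{\lfloor \alpha n \rfloor} q^{\lfloor \alpha n \rfloor - k}\, \delta_n(k/n),
\end{equation*}
which splits $S_n(\alpha)$ into a deterministic geometric-decay prefactor and a geometrically weighted sum of the centered martingale increments $\delta_n$.

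Next I would control the deterministic prefactor under the critical-window scaling $p = c/n$ that is natural near the percolation threshold: since $q = 1 - c/n$, a second-order Taylor expansion of $\log q$, combined with the error introduced by the floor function, yields $\lfloor \alpha n \rfloor \log q = -c\alpha + O(n^{-1})$, and exponentiating gives $q^{\lfloor \alpha n \rfloor} = e^{-c\alpha} + O(n^{-1})$. For the stochastic part I would apply Abel's summation to rewrite the geometrically weighted sum as $M_N - p \sum_{k=1}^{N-1} q^{N-k-1}\, M_k$, where $M_j = \sum_{k \le j} \delta_n(k/n)$ and $N = \lfloor \alpha n \rfloor$. The hypothesis bounds each $M_j$, the prefactor $p = c/n$ pairs with the geometric series $\sum_j q^j \sim 1/p$ to collapse to a bound of the same order as $\max_{j \le N} |M_j|$, and the resulting martingale error is then combined with the $O(n^{-1})$ Taylor error from the deterministic part to obtain the stated conclusion.

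The main obstacle I anticipate is precisely this Abel-summation accounting: a naive application of the hypothesis $M_j = o_p(n^{-1/2})$ against the $p \cdot (1/p) = O(1)$ total weight would return only $o_p(n^{-1/2})$, so in order to land at the sharper $O_p(n^{-1})$ target one must invoke a Doob-type maximal inequality on the cumulative martingale $\{M_j\}_{j \le N}$, exploiting the variance scaling $\text{Var}(\delta_n(k/n)) = O(p/n) = O(n^{-2})$ together with the fact that the weights $q^{N-k}$ decay geometrically toward the head of the sum. A secondary technical point is managing the floor-function rounding carefully enough that the Taylor expansion of $q^{\lfloor \alpha n \rfloor}$ genuinely lands at $O(n^{-1})$ rather than at the weaker $O(n^{-1/2})$ one would get from a less careful expansion.
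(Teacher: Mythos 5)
Your decomposition is the same one the paper uses: iterate the one-step recursion in Equation \ref{align:urn1_recursion} to obtain $S_n(\alpha) = q^{\lfloor \alpha n\rfloor}S_n(0) - \sum_{k\le \lfloor\alpha n\rfloor} q^{\lfloor\alpha n\rfloor - k}\,\delta_n(k/n)$, Taylor-expand $q^{\lfloor\alpha n\rfloor}$ against $e^{-c\alpha}$ for the deterministic part, and control the weighted martingale sum through a Doob-type maximal inequality fed by the binomial variance $\var(\delta_n(k/n)) \le pq/n = O(n^{-2})$ under $p=c/n$. The Abel-summation reorganization is only a cosmetic difference from the paper's direct bound on the supremum of the weighted sum; the ingredients and their roles are identical.

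The obstacle you flag at the end, however, is genuine, and the resolution you sketch does not close it. Doob's $L^2$ inequality gives $\Pbb\bigl(\sup_{j\le N}\abs{M_j}^2\bigr) \le 4\,\Pbb M_N^2 = 4\sum_{k\le N}\var(\delta_n(k/n)) = O(n^{-1})$, hence $\sup_{j}\abs{M_j} = O_p(n^{-1/2})$ --- and that exponent cannot be improved, since the corollary at the end of Section \ref{sect:sawlt} identifies this very sum as $n^{-1/2}B\bigl(e^{-\alpha c}(1-e^{-\alpha c})\bigr) + o_p(n^{-1/2})$, i.e.\ it is genuinely of order $n^{-1/2}$. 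The geometric weights you hope to exploit do not help: with $q = 1-c/n$ and $N-k\le \alpha n$, every weight $q^{N-k}$ is bounded below by $e^{-c\alpha}+o(1)$, so all of them are $\Theta(1)$ and contribute no extra decay; likewise the stated hypothesis only delivers $o_p(n^{-1/2})$ through your Abel step, as you yourself compute. The honest endpoint of your argument is therefore $S_n(\alpha) = e^{-c\alpha} + O_p(n^{-1/2})$, with $O_p(n^{-1})$ attainable only for the deterministic discrepancy $q^{\lfloor\alpha n\rfloor}-e^{-c\alpha}$. You should be aware that the paper's own proof suffers from the same defect: its display invoking the ``Doob $L^p$ inequality'' bounds the first moment of the supremum by what is in fact a bound on its second moment, so the $O_p(n^{-1})$ in the theorem statement is not established there either.
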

\begin{proof}
By definition $S_n(0) =1$. Therefore $S_n(1/n)$ can be written as:
\begin{equation*}
S_n(1/n) = q S_n(0) + \delta_n(1/n)
\end{equation*}
Likewise
\begin{equation*}
S_n(2/n) = q^2 S_n(0) + \delta_n(2/n) + \delta_n(1/n)
\end{equation*}
From this the process can be written as
\begin{equation} \label{eqn:DoobDecomp}
S_n(\alpha) = q^{\alpha n} + \sum_{i=1}^{\alpha n} \delta_n\left(i/n\right)
  q^{\alpha n - i}.
\end{equation}
The summation in \ref{eqn:DoobDecomp} is a martingale. The absolute value
of this summation is bound by the sum of the absolute values of each of the
summands, which is a submartingale process. Therefore, the supremum of the 
martingale can be bound by
\begin{align}
\Pbb \sup_{t \leq \alpha} \abs{\sum_{i=1}^{\alpha n} 
    \delta\left(i/n\right) q^{\alpha n - i} } &
      \leq \Pbb \sup_{t \leq \alpha} \sum_{i=1}^{\alpha n} 
      \abs{ \delta\left(i/n\right) q^{\alpha n - i} } \notag \\
  &\leq 4 \Pbb \sum_{i=1}^{\alpha n} \frac{npq}{n^2} \label{eqn:Doob}\\
  &\leq \frac{4cq}{n} \notag
\end{align}
where \ref{eqn:Doob} follows by the Doob $L^p$ inequality.
The expected maximum of the martingale increments is converging to zero at
a rate of $1/n$.

The difference between $q^k$ and $e^{-ck/n}$ is of order $O(n^{-2})$.
\begin{align*}
q^k - e^{-kc/n} &= \left(1 - \frac{c}{n}\right)^k - \left(1 - \frac{c}{n}
  \right)^k + O(n^{-2}) \\
  &= O(n^{-2})
\end{align*}
To extend this to the reals it is sufficient to show that for any increment
in the process, the difference between $S_n(\alpha)$ and $e^{-c\alpha}$
is $O(n^{-1})$.
\begin{align*}
e^{-\alpha n}  - S_n(\alpha) &= e^{-\alpha c} - 
  q^{\lfloor \alpha c \rfloor} \\
  &\leq e^{-(\alpha + 1/n) c} - q^{\lfloor \alpha c \rfloor} \\
  &\leq q^{\lfloor \alpha n \rfloor +1} - q^{\lfloor \alpha n\rfloor} \\
  &\leq \frac{c}{n} e^{-\alpha n} \\
  &= O(n^{-1})
\end{align*}
\end{proof}

\subsection{Applying the Functional Martingale Central Limit Theorem}

According to Equation \ref{eqn:urn1_reparam} each increment of the urn
process is a function of the last state of the process minus a martingale
increment. Theorem \ref{thm:stoch_approx} shows that if these martingale
increments are not too big then, in the limit, a reparameterized version
of the process will converge to a differential equation. In this section
it is shown that the martingale process converges to a stretched-out brownian
motion whose variance is decreasing in $n$.

Consider the process in Equation \ref{eqn:urn1_process}. The next urn
count is equal to the current urn count minus a binomial number of balls.
The binomial number is determined by the current number of balls in urn 1
and the probability that a ball is moved from urn 1 to urn 2. This time,
let $\nu_n(k) = b_n(k+1) - \Pbb b_n(k+1)$ and decompose the process in the 
following way
\begin{align*}
\frac{U_n(k+1)}{q^{k+1}} &= \frac{U_n(k) + b_n(k+1)}{q^{k+1}} \\
  &= \frac{U_n(k) - b_n(k+1) - \Pbb b_n(k+1) + \Pbb b_n(k+1)}{q^{k+1}} \\
  &= \frac{U_n(k) - p U_n(k) - \nu_n(k)}{q^{k+1}} \\
  &= \frac{U_n(k)}{q^k} - \frac{\nu_n(k)}{q^{k+1}}.
\end{align*}
Call define a new process $\{T\}$ where
\begin{equation} \label{eqn:T_process}
T_n(k) = \frac{U_n(k)}{\sqrt{n} q^{k}} - \sqrt{n}
\end{equation}
for integers $k \geq 0$. This process is a martingale with strictly 
increasing variance. 

\begin{thm}
The process $\{T\}$ defined in Equation \ref{eqn:T_process} converges to
a stretched-out brownian motion with variance $e^{ck/n} - 1$ at time 
$0 \leq k \leq n$.
\end{thm}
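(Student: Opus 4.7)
The plan is to invoke Pollard's Theorem~\ref{thm:PollardSBM} with $H(t) = e^{ct} - 1$, where $c = np$, after reparameterizing $\{T_n(k) : 0 \le k \le n\}$ to a cadlag process indexed by $t = k/n \in [0,1]$. The natural starting point is the telescoping identity
\begin{equation*}
T_n(k+1) - T_n(k) = - \frac{\nu_n(k)}{\sqrt{n}\, q^{k+1}},
\end{equation*}
which is immediate from the decomposition of $U_n(k+1)/q^{k+1}$ given just above the statement. Since $\nu_n(k) = b_n(k+1) - \Pbb_k b_n(k+1)$ is a centered, $\Fcal_{k+1}$-measurable increment, $\{T_n(k)\}$ is a square-integrable martingale with $T_n(0) = n/(\sqrt{n}\cdot 1) - \sqrt{n} = 0$, so condition (1) of Theorem~\ref{thm:PollardSBM} holds trivially.

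The core of the proof will be verifying condition (2). The conditional variance process is
\begin{equation*}
V_n(t) \;=\; \sum_{j=0}^{\lfloor tn\rfloor - 1} \frac{\mathrm{Var}(b_n(j+1)\mid \Fcal_j)}{n\, q^{2j+2}} \;=\; \sum_{j=0}^{\lfloor tn\rfloor - 1} \frac{U_n(j)\, p\, q}{n\, q^{2j+2}}.
\end{equation*}
I would plug in the conclusion of Theorem~\ref{thm:stoch_approx}, which gives $U_n(j)/n = q^j + O_p(n^{-1})$ uniformly in $j \le n$, and then collapse the geometric sum:
\begin{equation*}
\sum_{j=0}^{\lfloor tn\rfloor - 1} \frac{p}{q^{j+1}} \;=\; \frac{p}{q} \cdot \frac{q^{-\lfloor tn\rfloor} - 1}{q^{-1} - 1} \;=\; q^{-\lfloor tn\rfloor} - 1 \;\longrightarrow\; e^{ct} - 1.
\end{equation*}
The remainder from the $O_p(n^{-1})$ error contributes at most $O_p(n^{-1}\sum_{j\le n} q^{-(j+1)}) = O_p(1/(nq^{n+1})) = O_p(e^c/n)$, which vanishes; this delivers condition (2) in probability.

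For condition (3) I will bound the maximum squared jump $J_t(T_n)^2 = \max_{k \le tn} \nu_n(k)^2/(n q^{2k+2})$. The natural move is to pass to a sum and exploit the conditional variance computation already in hand:
\begin{equation*}
\Pbb\,J_t(T_n)^2 \;\le\; \sum_{k=0}^{\lfloor tn \rfloor - 1} \frac{\Pbb\,\nu_n(k)^2}{n\, q^{2k+2}} \cdot \frac{1}{?}
\end{equation*}
Since the individual conditional second moment equals $U_n(k)pq/(nq^{2k+2}) = O_p(1/q^{k+1})$, a single jump is typically of order $n^{-1/2}$ and at most $O(\sqrt{n}/q^{k+1})$ deterministically. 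I expect the main obstacle to be controlling the \emph{maximum} (not the sum) of $n$ such increments in mean square; the cleanest route is a Bernstein or Bennett tail bound on each centered binomial $\nu_n(k)$ together with a union bound, giving $|\nu_n(k)| \le C\sqrt{nq^k\log n}$ with high probability uniformly in $k \le n$, so that the max jump is $O(\sqrt{(\log n)/n})$. A truncation argument on the rare event where some $\nu_n(k)$ exceeds this bound, combined with the crude deterministic bound $|\nu_n(k)| \le U_n(k) \le n$, will close the gap and yield $\Pbb\,J_t(T_n)^2 \to 0$. Once all three conditions are in place, Theorem~\ref{thm:PollardSBM} delivers convergence of $T_n$ in distribution to $B(H(t))$ with $H(t) = e^{ct} - 1$.
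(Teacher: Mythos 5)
Your proposal is correct and follows the same overall route as the paper: both arguments verify the three conditions of Theorem~\ref{thm:PollardSBM} for the martingale $\{T_n\}$, and your condition-(2) computation collapsing $\sum_j p\,q^{-(j+1)}$ to $q^{-\lfloor tn\rfloor}-1\to e^{ct}-1$ is exactly the paper's. Where you genuinely diverge is instructive. For condition (2) the paper computes the \emph{unconditional} variance $\mathrm{VAR}(T_n(k))$ by substituting the expectation $\Pbb U_n(i)\approx nq^{i}$, which only gives convergence of expectations; you instead work with the conditional variance process and invoke the in-probability concentration $U_n(j)/n=q^j+O_p(n^{-1})$ from Theorem~\ref{thm:stoch_approx}, which is what Pollard's condition (2) actually requires, so your treatment is the more faithful one. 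For condition (3) the paper bounds the maximum squared jump by the normalized sum $\tfrac1n\sum_i\nu_n^2(i)$ and controls it with a fourth-moment Markov bound; your Bernstein-plus-union-bound-plus-truncation route is heavier machinery but has the advantage of delivering the expectation statement $\Pbb\,J_t(T_n)^2\to0$ directly rather than only a tail bound. Two bookkeeping slips to repair, neither structural: the condition-(2) remainder must carry the weight $pq/q^{2j+2}$ from the conditional variance, giving $O_p\bigl(n^{-1}\cdot p\sum_j q^{-(2j+2)}\bigr)=O_p(n^{-1})$; as you wrote it, $O_p\bigl(n^{-1}\sum_{j\le n}q^{-(j+1)}\bigr)$ is actually $O_p(1)$, since that sum has $n$ terms each of order one. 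And in the regime $p=c/n$ one has $U_n(k)pq=O(1)$, so the binomials are in the Poisson regime and Bernstein gives $\abs{\nu_n(k)}=O(\log n)$ uniformly (the linear term dominates the $\sqrt{\mathrm{variance}\cdot\log n}$ term), making the maximum jump $O(\log n/\sqrt n)$ rather than $O(\sqrt{(\log n)/n})$ --- still amply sufficient for condition (3).
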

\begin{proof}
Condition 1 of Theorem \ref{thm:PollardSBM} is satisfied by definition of the
process $\{T\}$. Condition 2 can be derived using a conditioning argument.
\begin{align*}
VAR \left(T_n(k) \right) &= \Pbb \sum_{i=1}^k \left( 
    \frac{\nu_n(i)}{\sqrt{n} q^i} \right)^2\\
  &= \frac{1}{n} \Pbb \sum_{i=1}^k \Pbb_{i-1} 
    \left( \frac{\nu_n(i)}{\sqrt{n} q^i} \right)^2 \\
  &= \frac{1}{n} \Pbb \sum_{i=1}^k \frac{pq U_n(i)}{q^{2i}} \\
  &= \frac{1}{n} \sum_{i=1}^k \frac{npq^{i+1}}{q^{2i}} \\
  &= pq \sum_{i=1}^{k-1} q^i + O(n^{-1}) \\
  &= pq \left( \frac{1 - q^{-(k+1)}}{q - 1} \right) + O(n^{-1}) \\
  &= q^{-k} - 1 + O(n^{-1}) \\
  &= e^{ck/n} - 1 + O(n^{-1})
\end{align*}
And condition 3 can be derived by realizing that the largest jump is bound
by the sum of all jumps in the process up to time $k$. Let $\varepsilon > 0$, 
then
\begin{align*}
\Pbb \left\{ \left(\max_{i \leq k} \frac{\nu_n(i)}{\sqrt{n}} \right)^2
  \geq \sqrt{\varepsilon} \right\} &\leq 
    \Pbb \left\{ \left(\max_{i \leq k} \frac{\nu_n(i)}{\sqrt{n}} \right)^4
    \geq \varepsilon \right\} \\
  &\leq \Pbb \left\{ \frac{1}{n} \sum_{i=1}^k \nu_n^2(i) \geq \varepsilon\right\} \\
  &\leq \frac{\Pbb \left( \sum_{i=1}^k \nu_n(i) \right)^4}{n^2 \varepsilon} \\
  &\leq \frac{ \left( \Pbb \sum_{i=1}^k \nu_n^2(i) \right)^2 }{n^2 \varepsilon} \\
  &\leq \frac{n (npq)^2 }{n^2 \varepsilon} \\
  &\leq \frac{(cq)^2}{n}
\end{align*}
which approaches zero as $n\rightarrow \infty$. 
\end{proof}
\begin{cor}
If $\delta_n(\alpha)$ is the martingale increment from Equation
\ref{align:urn1_recursion} then
\begin{equation}
\sum_{\alpha n} \delta_n(i) = n^{-1/2} B\left(e^{-\alpha c} \left(
  1 - e^{-\alpha c} \right) \right) + o_p(n^{-1/2})
\end{equation}
where $B(H(t))$ is a stretched-out brownian motion with variance process $H(t)$.
\end{cor}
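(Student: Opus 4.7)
The plan is to express the partial-sum process on the left-hand side in terms of the martingale $\{T_n\}$ from Equation \ref{eqn:T_process} and then transport its stretched-Brownian-motion limit through a deterministic scaling. The first key observation is the Doob-style identity appearing as Equation \ref{eqn:DoobDecomp} in the proof of Theorem \ref{thm:stoch_approx}, which gives
$$\sum_{i=1}^{\alpha n} \delta_n(i/n)\, q^{\alpha n - i} \;=\; S_n(\alpha) - q^{\alpha n}.$$
Combining this with the definition $T_n(k) = U_n(k)/(\sqrt{n}\, q^{k}) - \sqrt{n}$ and the relation $S_n(\alpha) = U_n(\alpha n)/n$ rearranges into the clean bridge identity
$$\sum_{i=1}^{\alpha n} \delta_n(i/n)\, q^{\alpha n - i} \;=\; \frac{q^{\alpha n}}{\sqrt{n}}\, T_n(\alpha n).$$

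Next I would plug in the two ingredients already established. The theorem just proved gives $T_n(\alpha n) = B(e^{c\alpha} - 1) + o_p(1)$ for a standard Brownian motion $B$, and the computation inside the proof of Theorem \ref{thm:stoch_approx} supplies $q^{\alpha n} = e^{-c\alpha} + O(n^{-1})$. Substituting and collecting lower-order terms yields
$$\sum_{i=1}^{\alpha n} \delta_n(i/n)\, q^{\alpha n - i} \;=\; \frac{1}{\sqrt{n}}\, e^{-c\alpha}\, B(e^{c\alpha} - 1) + o_p(n^{-1/2}).$$
The last step is a Brownian-scaling rewrite: $e^{-c\alpha} B(e^{c\alpha}-1)$ is a centered Gaussian with variance $e^{-2c\alpha}(e^{c\alpha}-1) = e^{-c\alpha}(1 - e^{-c\alpha})$, and the identity $a B(t) \stackrel{d}{=} B(a^{2} t)$ lets me re-express this as $B(e^{-c\alpha}(1-e^{-c\alpha}))$ for a reparameterized Brownian motion, which matches the right-hand side of the corollary.

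The main obstacle is conceptual rather than technical. The deterministic rescaling $e^{-c\alpha} B(e^{c\alpha}-1)$ is an Ornstein--Uhlenbeck-type Gaussian process in $\alpha$ whose covariance works out to $e^{-c\alpha_2}(1 - e^{-c\alpha_1})$ for $\alpha_1 \leq \alpha_2$, which is \emph{not} the covariance $H(\alpha_1) = e^{-c\alpha_1}(1-e^{-c\alpha_1})$ of a Brownian motion stretched by $H$. Consequently the rewrite in the last step is only an equality in distribution at each fixed $\alpha$, not an equivalence of processes in $\alpha$. I would therefore interpret the corollary as a one-dimensional distributional statement indexed by $\alpha$, which is exactly what is delivered by stringing together the Doob decomposition, the expansion $q^{\alpha n} = e^{-c\alpha} + O(n^{-1})$, and the stretched-Brownian-motion theorem just established; an upgrade to process-level convergence in $\alpha$ would require tracking the joint law through the time change, which the three inputs above do not by themselves provide.
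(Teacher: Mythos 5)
Your proposal is correct and follows essentially the same route as the paper: both identify the weighted martingale sum with $q^{\alpha n} T_n(\alpha n)/\sqrt{n}$ (the paper does this implicitly by computing $\mathrm{VAR}(U_n(k)/\sqrt{n}) = q^{2k}(e^{ck/n}-1) = e^{-ck/n}(1-e^{-ck/n}) + O(n^{-1})$) and then invoke the stretched-Brownian limit of $T_n$ together with Brownian scaling. Your closing caveat --- that $e^{-c\alpha}B(e^{c\alpha}-1)$ agrees with $B(e^{-c\alpha}(1-e^{-c\alpha}))$ only in its one-dimensional marginals, since its covariance in $\alpha$ is of Ornstein--Uhlenbeck type rather than that of a time-changed Brownian motion --- is a genuine observation that the paper's own proof passes over in silence.
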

\begin{proof}
Recall that $T_n(k)$ converges to a $B(e^{ck/n}-1)$. The martingale 
process $U_n(k)/\sqrt{n}$ also converges to the stretched-out 
brownian motion. 
\begin{align*}
VAR\left(\frac{U_n(k)}{\sqrt{n}} \right) &= q^{2k} \left(e^{ck/n}-1\right) \\
  &= e^{-ck/n} \left(1 - e^{-ck/n} \right)+ O(n^{-1})
\end{align*}
The result follows by realizing that the variance process $\Wcal_n(\alpha)$ is 
half an order of magnitude smaller than $U_n(k)$ and as a result, so is its
standard deviation.
\end{proof}



\section{A General Boundary-Crossing Result for the Directed Percolation
Algorithm}
\label{sect:giantStoppingTime}

\begin{thm} \label{thm:HittingTime}
Let $\Wcal_n(\alpha)$ be the approximation of the process of interest:
\begin{equation} \label{eqn:AproxFunc}
\Wcal_n(\alpha) = v(\alpha) + n^{-1/2} B\left(v(\alpha) (1-v(\alpha))\right)
\end{equation}
where $v(\alpha) = e^{-\alpha c}$ and $B(\alpha)$ is a stretched-out
brownian motion with variance parameter $\alpha$.  Let 
$\alpha_0 = \alpha - \epsilon n^{-1/2}$ with $\epsilon > 0$. 
Let $\tau_A$ be the first time $\Wcal_n(\alpha) > A$ 
for $0 < A \leq 1$. The density of $\tau_A$ is
\begin{equation} \label{eqn:hittingTime}
\tau_A(t) = c \sqrt{\frac{n A}{1-A}}
  \phi\left( \sqrt{\frac{n A}{1-A}} (tc-\log(A)) \right).
\end{equation}
\end{thm}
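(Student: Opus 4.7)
The plan is to exploit small-noise asymptotics. The process $\Wcal_n$ is the smooth deterministic curve $v(\alpha) = e^{-c\alpha}$ perturbed by a Gaussian term of order $n^{-1/2}$, so its first hitting time of the level $A \in (0,1)$ is concentrated in an $O(n^{-1/2})$ window around the deterministic crossing time $\alpha^* = -\log(A)/c$. Inside that window the problem linearizes, and the hitting time becomes a Gaussian random variable whose density can be read off directly from its mean and variance.

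First I would localize the crossing. Using the maximal inequality for Brownian motion together with the strict monotonicity $v'(\alpha) = -c v(\alpha) < 0$, one shows that for every $\eta > 0$ the noise $n^{-1/2} B(v(\alpha)(1-v(\alpha)))$ is, with probability tending to one, strictly smaller in modulus than $|v(\alpha) - A|$ on the complement of $[\alpha^*-\eta,\alpha^*+\eta]$. Hence with high probability $\tau_A$ lies in $[\alpha^*-\eta,\alpha^*+\eta]$ and the crossing there is unique. Shrinking $\eta$ at the natural rate $n^{-1/2}$ (this is where the scaling $\alpha_0 = \alpha - \epsilon n^{-1/2}$ in the hypothesis enters) confines $\tau_A - \alpha^*$ to the $O_p(n^{-1/2})$ regime in which the linearization below is accurate.

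Within that window I would Taylor-expand $v(\alpha) = A - cA(\alpha - \alpha^*) + O((\alpha - \alpha^*)^2)$ and use uniform continuity of the variance profile $v(\alpha)(1-v(\alpha))$ together with the modulus of continuity of Brownian motion to conclude that $B(v(\alpha)(1-v(\alpha))) = Z + o_p(1)$ uniformly on that window, where $Z := B(A(1-A)) \sim \Normal(0,\, A(1-A))$. Substituting into $\Wcal_n(\tau_A) = A$ yields, to leading order,
\begin{equation*}
-cA(\tau_A - \alpha^*) + n^{-1/2} Z = 0,
\end{equation*}
so that $\tau_A - \alpha^* = Z/(cA\sqrt{n}) \sim \Normal\bigl(0,\, (1-A)/(c^2 A n)\bigr)$. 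Writing out the normal density with this mean $\alpha^* = -\log(A)/c$ and standard deviation $\sigma = (c\sqrt{n})^{-1}\sqrt{(1-A)/A}$, and using $\phi(-x)=\phi(x)$ to absorb sign conventions, recovers the closed form (\ref{eqn:hittingTime}).

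The main obstacle is the localization step: the error from approximating $B(v(\alpha)(1-v(\alpha)))$ by the single variable $Z$ on an $n^{-1/2}$ window must be controlled simultaneously with the quadratic Taylor remainder of $v$, so that both errors are $o_p(n^{-1/2})$ and do not pollute the Gaussian density. Once the correct local time scale is identified and these two error estimates are in place, the conclusion reduces to a one-line computation from the linearized equation.
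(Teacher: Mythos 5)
Your proposal follows essentially the same route as the paper: linearize $v$ at the deterministic crossing point $\alpha_0=-\log(A)/c$, argue that the Brownian term may be frozen at $B(v(\alpha_0)(1-v(\alpha_0)))=B(A(1-A))$ up to $o_p(1)$, and solve the linearized equation $\Wcal_n(\tau_A)=A$ to read off a Gaussian density with the stated mean and standard deviation. The only difference is that you make explicit the localization and uniqueness-of-crossing step that the paper leaves implicit, which is a point in your favor rather than a divergence of method.
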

\begin{proof}
Equation \ref{eqn:AproxFunc} can be expressed as
\begin{equation} \label{eqn:TaylorAproxFunc}
\Wcal_n(\alpha) = v(\alpha_0) + (\alpha - \alpha_0) v'(\alpha_0) +
  \Rcal_d(\alpha) + n^{-1/2} B\left(v(\alpha_0)(1-v(\alpha_0)\right) + 
  \Rcal_r(\alpha)
\end{equation}
where $\Rcal_d(\alpha) = O(n^{-1})$ and can be thought of as the remainder 
of the deterministic portion of $\Wcal_n$ and 
$\Rcal_d(\alpha) = o_p(n^{-1/2})$ is the remainder of the random portion.

First, show that $R_d(\alpha) = O(n^{-1})$ by realizing that the third
order term of the Taylor series expansion of the deterministic part
is
\begin{equation*}
\frac{\left(\alpha - \alpha_0\right)^2}{2} \left(e^{-c \alpha_0}\right)'' =
  \frac{\epsilon}{2n} c^2 e^{-c \alpha_0}
\end{equation*}
since the difference $\alpha - \alpha_0 = \epsilon n^{-1/2}$.

Next show that 
\begin{equation*}
B(v(\alpha) \left(1-v(\alpha)\right)) = B(v(\alpha_0)\left(1-v(\alpha_0) 
  \right) ) + o_p(1)
\end{equation*}
by substituting $\alpha = \alpha_0 + \epsilon n^{-1/2}$
\begin{align*}
B(v(\alpha) \left(1-v(\alpha)\right)) &= B( v(\alpha_0 - \epsilon n^{-1/2})
  (1 - v(\alpha_0 - \epsilon n^{-1/2}) ) \\
  &= B(e^{-c(\alpha_0 + \epsilon n^{-1/2})} - e^{-2c(\alpha_0 + 
    \epsilon n^{-1/2})})
\end{align*}
and showing that the exponential variance terms converge to the original
\begin{align*}
e^{-c(\alpha_0 + \epsilon n^{-1/2})} &= 1 - c(\alpha_0 - \epsilon n^{-1/2}) 
  + \frac{1}{2} c^2 (\alpha_0 - \epsilon n^{-1/2})^2 + ... \\
  &= 1 - c \alpha_0 + \frac{1}{2} (c \alpha_0)^2 + ... + O(n^{-1/2}) \\
  &= e^{-c \alpha_0} + O(n^{-1/2})
\end{align*}

Finally, set $\alpha_0$ to $-\log(A)/c$. 
Substitute into Equation \ref{eqn:TaylorAproxFunc}, note that 
$\Wcal_n(\alpha) = A$ by assumption and $v(\alpha_0) = A$ as a consequence
of the choice for $\alpha_0$. 
\begin{equation}
\alpha = \alpha_0 + \frac{1}{c}
  \sqrt{\frac{(1-v(\alpha_0))}{n v(\alpha_0)}} Z + 
    o_p(n^{-1/2})
\end{equation}
The result shows that $\alpha$ is distributed as normal and is 
centered at $\alpha_0$. The proof follows by realizing that the 
hitting time $\tau_A$ has density equal to that of $\alpha$.
\end{proof}

\section{Applications: Finding the Probability that the Giant Component
has been Exhausted}

The results from Section \ref{sect:giantStoppingTime} can be used to 
get distribution of the time when the giant component is exhausted 
in the directed percolation algorithm defined in Section \ref{sect:dp_algo}.

Equation \ref{eqn:AproxFunc} shows that if and $n$ is large then
the process is approximately equal to it's stochastic approximation, 
$v(\alpha)$. Theorem \ref{thm:subcritical} showed that the first time the
number of ``visited'' vertices is less than the time step corresponds to
exhausting the first component the algorithm percolates over. From these
two results it follows that the first component is exhausted when
\begin{equation*} \label{eqn:crossing}
e^{-c \alpha} = 1 - \alpha
\end{equation*}
when $n$ is big. The result also shows that process is asymptotically 
subcritical when $c \leq 1$.  Since the only solution to
$e^{-c \alpha} = 1 - \alpha$ is $\alpha = 0$. It should be noted that
this result is consistent with the result from the 
original \Erdos-\Renyi paper \cite{Erdos1960} where, asymptotically,
the ratio of the largest component to the total number of vertices in a 
component is $O(\log(n)/n)$ when $c < 1$ and $O(n^{-1/3})$ when $c=1$.

When $n$ is not too large, the results from the previous section give the 
distribution for the first time
the process crosses any pre-determined horizontal line. This result can be
used to find the probability that the giant component has been exhausted
in the directed percolation algorithm \ref{algo:percolation} at any
time step. For a fixed $c$, this is accomplished by numerically solving 
for $\alpha$ in Equation \ref{eqn:crossing}, calculating $A = e^{-c \alpha}$, 
and use Equation \ref{eqn:hittingTime} to get the distribution.

\begin{figure}
\centerline{
\includegraphics[width=4.5in]{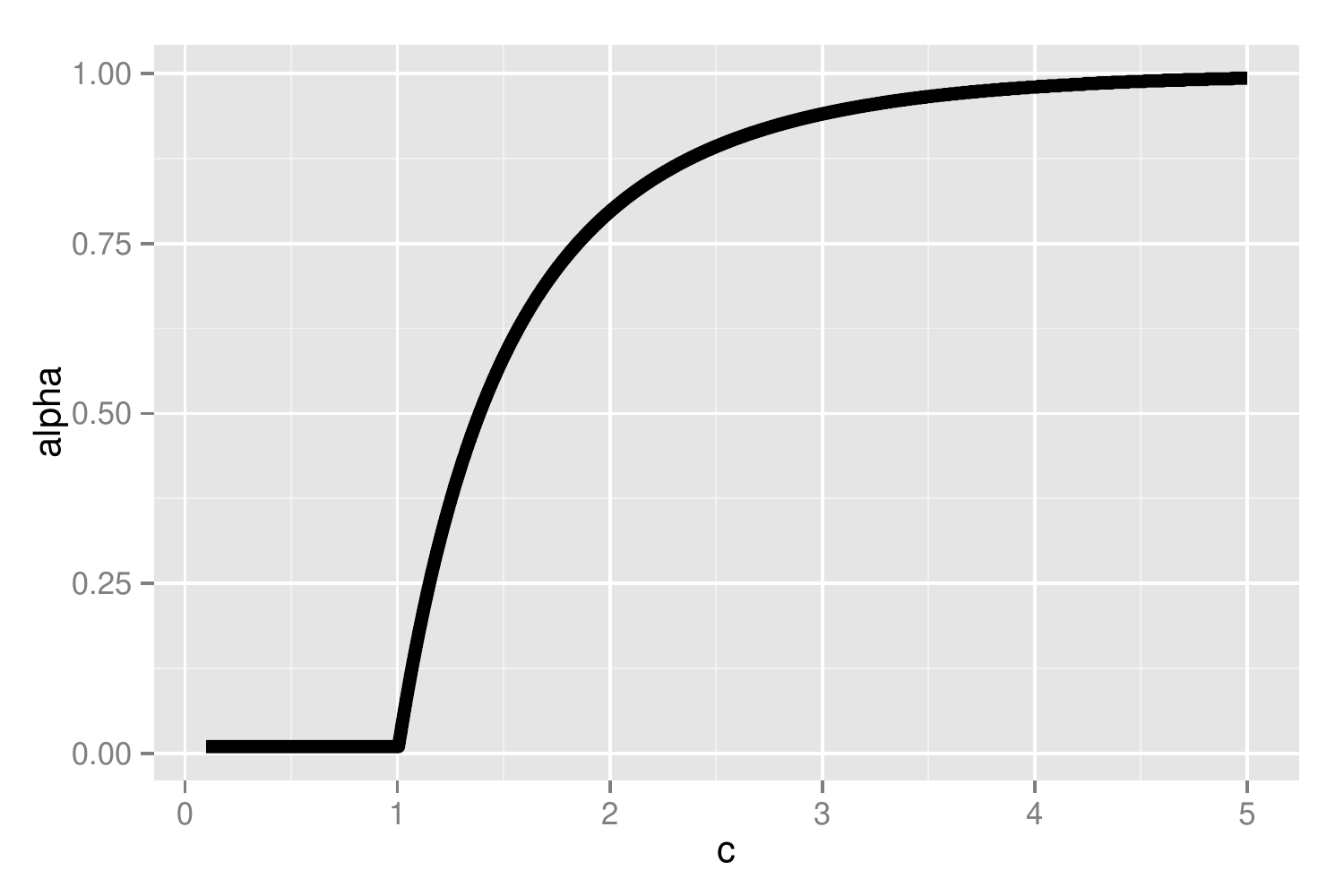}
}
\caption{The solution for $\alpha$ for the function 
$e^{-\alpha c} = 1 - \alpha$.}
\end{figure}

\section{Conclusion}

This paper presents weak limit results for a directed percolation algorithm
on \Erdos-\Renyi graphs. This process concentrates on an ordinary differential
equation and it is shown that, in a pre-asymptotic setting, the process
can be approximated by it's asymptotic ODE plus a stretched-out brownian
motion. While many of the results presented are specific to the choice of
the algorithm and the type of random graph, the underlying approach is more
general. The derived results only require a Lipschitz condition on the
conditional increments of the process along with control over the 
variance of the process. As a result, the techniques used can be seen 
as a general approach to uncovering the characteristics of graphs, 
modeling outbreaks, studying new propagation in social networks, etc.
when the total number of vertices is relatively smaller.

\end{document}